\newtheorem{observation}{Observation}
\newtheorem{proposition}{Proposition}
\newtheorem{theorem}{Theorem}
\newtheorem{lemma}{Lemma}
\title{A short proof of an upper bound on the growth constant of polyiamonds}
\author{Vuong Bui\footnote{Swinburne Vietnam, FPT University, Hanoi, 80 Duy Tan Street, Hanoi 100000, Vietnam  (\href{mailto://bui.vuong@yandex.ru}{\texttt{bui.vuong@yandex.ru}})}}
\date{}
\begin{document}

\maketitle

\begin{abstract}
	We provide a short and elementary proof that the growth constant of polyiamonds is at most $1+2z+3z^2$ for the unique real root $z$ of the equation $2z^3+z^2-1=0$.
    This coincidentally suffices to recover the best known upper bound $3.6108$.
    Unlike the previous proof of this bound, which relied on computer-assisted technical arguments and the counts of polyiamonds with up to 75 triangles, our method is based on a straightforward recurrence that can be verified by hand with minimal effort.
\end{abstract}
\section{Introduction}
A polyiamond is an edge-connected set of triangles on the triangular lattice. There are two types of triangles, either to the left or to the right. See Fig.~\ref{fig:intro} for illustrations of the lattice with the two triangles, and the enumeration for the polyiamonds with up to 3 triangles. Two polyiamonds are considered to be the same and counted once if one is a translate of the other.
\begin{figure}[h]
\centering
\includegraphics[width=0.6\textwidth]{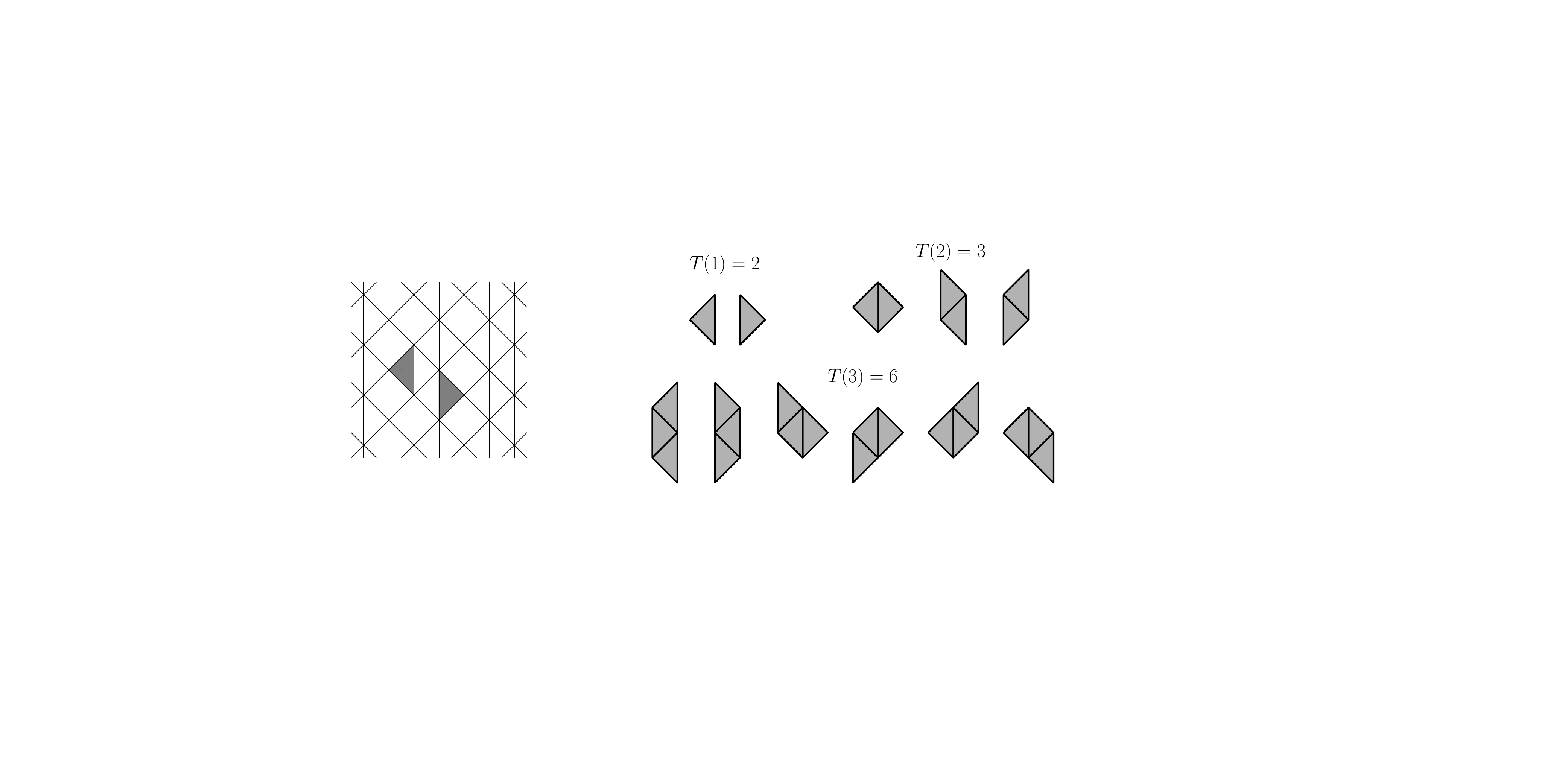}
\caption{Triangular lattice and polyiamonds with up to 3 triangles.}
\label{fig:intro}
\end{figure}

Let $T(n)$ denote the number of polyiamonds with $n$ triangles. The growth constant $\lambda_T=\lim_{n\to\infty}\frac{T(n+1)}{T(n)}$ was shown to exist in~\cite{madras1999pattern}. The supermultiplicativity of $T(n)$, in the sense that $T(\ell+m)\ge T(\ell)T(m)$ for every $\ell,m$ not both equal to $1$, was proved in~\cite{bui2023number}. It follows from Fekete's lemma that $\lambda_T=\lim_{n\to\infty}\sqrt[n]{T(n)}=\sup_n \sqrt[n]{T(n)}$. Plugging the largest known value $T(75)$ gives the best known lower bound 
\[
	\lambda_T\ge 2.8578.
\]
On the other hand, it took quite a while to reduce the trivial upper bound $4$ to the best known upper bound
\[
	\lambda_T\le 3.6108.
\]
The proof in~\cite{barequet2019improved} involves constraining the sizes of polyiamonds when we decompose a polyiamond into two smaller ones and studying the corresponding recurrences to upper bound $T(n)$. Like the situation of the lower bound, the proof uses the first 75 known values of $T(n)$ and only uses the recurrence to bound unknown values of $T(n)$ for $n>75$. In particular, they estimate the growth rate of the following upper bound $U(n)\ge T(n)$:
\[
	U(n) =
	\begin{cases}
		T(n) & \text{for $n \le 75$}, \\
		\left\lfloor \sum_{k=\lceil \frac{n-1}{3} \rceil}^{\lfloor\frac{2n+1}{3}\rfloor} \frac{(k+2)(n-k+2)}{4} U(k)U(n-k) + \frac{(n/2+2)^2}{4} U(\frac{n}{2}) \right\rfloor & \text{for $n > 75$}.
	\end{cases}
\]

Although the technique is quite heavy and the proof is computer-assisted, it is unfair to say that it is not efficient as the nature of such problems is usually hard. Let us look at the sibling problem for polyominoes, which are sets of edge-connected cells on the square lattice. While the corresponding growth rate $\lambda$, also known as Klarner's constant, was shown to be at most $4.649551$ by studying millions of the so-called twigs by Klarner and Rivest~\cite{klarner1973procedure} in 1973, it took almost half a century for the next improvement with $\lambda\le 4.5252$ by Barequet and Shalah~\cite{barequet2022improved} in 2022, using trillions (i.e. millions of millions) of twigs with
several additional tricks. Meanwhile, empirical estimates suggest that $\lambda$ is slightly larger than $4$, see \cite{sykes1976percolation}. There is an elementary approach without computer assistant in \cite{bui2025bounding} but it is not quite successful yet as it only proves $\lambda\le 4.83$. In other words, both polyominoes and polyiamonds seem to suffer the same situation. Note that $\lambda_T$ is believed to be slightly larger than $3$, see \cite{sykes1976percolation}. It is interesting that the estimated growth rates are slightly larger than the degrees of vertices (also known as the coordination numbers of the lattices) in these cases. However, this is not always the case since the growth rate of polyhexes is estimated to be quite less than $6$ at around $5.19$, see also \cite{sykes1976percolation}.

While simple recurrences do not yet provide an efficient upper bound on $\lambda$, the following theorem, whose proof uses elementary recurrences, yields the best known upper bound on $\lambda_T$.
\begin{theorem}\label{thm:main}
	The growth rate $\lambda_T$ of polyiamonds is at most $1+2z+3z^2$ for the unique real root $z$ of the equation $2z^3+z^2-1=0$.
\end{theorem}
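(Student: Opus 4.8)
The plan is to pass to the dual lattice and reduce the problem to counting subtrees of the hexagonal lattice. The dual of the triangular lattice is the hexagonal (honeycomb) lattice $H$: each triangle becomes a vertex, triangles sharing an edge become adjacent, and the two triangle types become the two colour classes of the bipartite, $3$-regular graph $H$. Under this correspondence a polyiamond with $n$ triangles is a vertex set $V$ with $|V|=n$ and $H[V]$ connected, and translation of polyiamonds matches translation of vertex sets. Each such $V$ is the vertex set of a spanning subtree of $H[V]$, and conversely the vertex set of any subtree of $H$ induces a connected subgraph; hence $T(n)$ is at most the number of $n$-vertex subtrees of $H$ up to translation, which is at most a constant times the number $t(n)$ of $n$-vertex subtrees of $H$ through a fixed vertex $o$. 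Since $\lambda_T=\lim_n\sqrt[n]{T(n)}$, it suffices to prove $t(n)\le C\,(1+2z+3z^2)^n$ for some constant $C$.

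The heart of the argument is a recurrence for $t(n)$ obtained by growing such a subtree outward from $o$ in depth-first order. Rooted at $o$, it is an ordinary rooted tree in which $o$ uses some subset of its three incident edges and every other used vertex inherits one edge from its parent and, independently, uses or omits each of its two remaining edges. If one drops the constraint that the branches below a vertex occupy disjoint vertex sets, this is exactly the recurrence for subtrees of the infinite binary tree and yields only the bound $4$. The improvement comes from $H$ having girth $6$: two branches leaving a common vertex can meet only by wrapping all the way around a hexagonal face at that vertex, and shorter reconnections are impossible. For an upper bound it therefore suffices to attach to each used vertex a bounded amount of state recording how far each active branch has travelled around the hexagon it might close, and to forbid the closing step — longer-range self-intersections may simply be ignored, since allowing extra configurations only inflates the count, and (as the final computation shows) forbidding the hexagon-closures alone already drops the rate from $4$ down to $1+2z+3z^2$. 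This bounded-memory rule is a finite system of polynomial equations for a few auxiliary generating functions, equivalently a linear recurrence with bounded memory, that can be written out and checked by hand, and whose coefficient sequence dominates $t(n)$.

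The last step is then routine: the system defines an algebraic generating function, and to extract the bound one verifies directly that $n\mapsto C\,(1+2z+3z^2)^n$ satisfies the recurrence — the relation $2z^3+z^2=1$ being exactly what makes the coefficients match — so that for $C$ large it bounds $t(n)$ term by term, giving $t(n)=O((1+2z+3z^2)^n)$ and hence $\lambda_T\le 1+2z+3z^2$. The one genuinely delicate point is the middle step: pinning down which bounded state captures the hexagon constraint and nothing finer, and confirming that the resulting slight over-count is still governed by $2z^3+z^2-1=0$ rather than by something as weak as $4$; everything else is elementary bookkeeping.
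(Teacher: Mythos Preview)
Your outline correctly identifies the overall strategy --- pass to the hexagonal dual, peel off vertices one at a time, and track only enough local information (governed by the girth-$6$ constraint) to obtain a finite polynomial system whose dominant singularity is $1/(1+2z+3z^2)$. This is exactly the route the paper takes. But what you have written is not a proof: the entire content of the argument is the ``genuinely delicate point'' you explicitly defer, namely \emph{specifying} the finite set of states, \emph{writing down} the recurrences among them, and \emph{verifying} that the resulting system reduces to $2z^3+z^2-1=0$. The paper does precisely this work: it introduces three explicit types $G_n$, $H_n$, $K_n$ (distinguished by which nearby positions around the marked vertex are known to be empty), proves the three inequalities $G_n\le\sum_{i+j=n-1} G_iH_j$, $H_n\le\sum_{i+j=n-1} G_iK_j$, $K_n\le G_{n-1}$, and then eliminates $h,k$ from the corresponding generating-function equations to obtain $z/x=1+z+z^2+z^3$ with $z=xg(x)$, whence the singularity condition $1/x=1+2z+3z^2$ and the cubic. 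Without producing those states and recurrences (or equivalent ones) you have only asserted that some bounded-memory scheme yields the stated bound; nothing in your sketch rules out the scheme giving, say, $3.7$ instead of $3.6108$.

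Two smaller points. First, the detour through spanning subtrees is unnecessary and slightly muddies the logic: the paper decomposes polyiamonds directly, and the over-count arises from dropping the long-range non-overlap constraint between the two pieces after removing the marked vertex, not from replacing animals by trees. Second, the system is not a ``linear recurrence with bounded memory'' but a convolutional (polynomial) one, since two branches are grown simultaneously; this matters for how the growth rate is extracted (via the singularity of an algebraic generating function, as the paper does, rather than from a characteristic polynomial of a linear recurrence).
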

One can see that the polynomials are quite mnemonic. In particular, $2z^3+z^2-1=0$ can be written as $\sum_{i=0}^3 (i-1)z^i = 0$, while $1+2z+3z^2 = \sum_{i=0}^2 (i+1)z^i$. They suggest that the recurrences should be simple.

The unique real root of the equation is 
\[
	z = \frac{1}{6} \left(-1 + \sqrt[3]{53 + 6\sqrt{78}} + \sqrt[3]{53 - 6\sqrt{78}} \right),
\]
which leads to an interesting coincidence\footnote{It is merely coincidence as the two approaches are not related.} that
\[
	\lambda_T\le 1+2z+3z^2 < 3.6108.
\]
The latter (strict) inequality is actually almost an equality. Although our bound is closer to $3.6107$ than $3.6108$, it would not be quite fair to say that we have improved the bound, since the work \cite{barequet2019improved} could already round the result up.
Instead, the main merit of this article is a very simple recurrence to prove the bound, which is elementary, short, and can be easily checked by hand, as in Section~\ref{sec:proof}.

For simplicity, we use the dual representation of polyiamonds, as a set of vertices connected by edges on the hexagonal lattice, see Fig.~\ref{fig:hexlat}. For example, one can see that $T(3)$ is $6$, because there are $6$ ways of connecting $3$ vertices using $2$ edges, nicely corresponding to $6$ angles of a hexagon. One can also check out the $6$ polyiamonds in the triangular lattice in Fig.~\ref{fig:intro}. Note that $T(n)$ are the same in both representations, except for $T(1)=2$ in the triangular lattice and $T(1)=1$ in the hexagonal lattice. (We cannot distinguish the left and right triangles using only $1$ vertex.) However, it has no effect on the asymptotic behavior. The dual representations have already been discussed at the end of~\cite{bui2023number}.

\begin{figure}[h]
\centering
\includegraphics[width=0.5\textwidth]{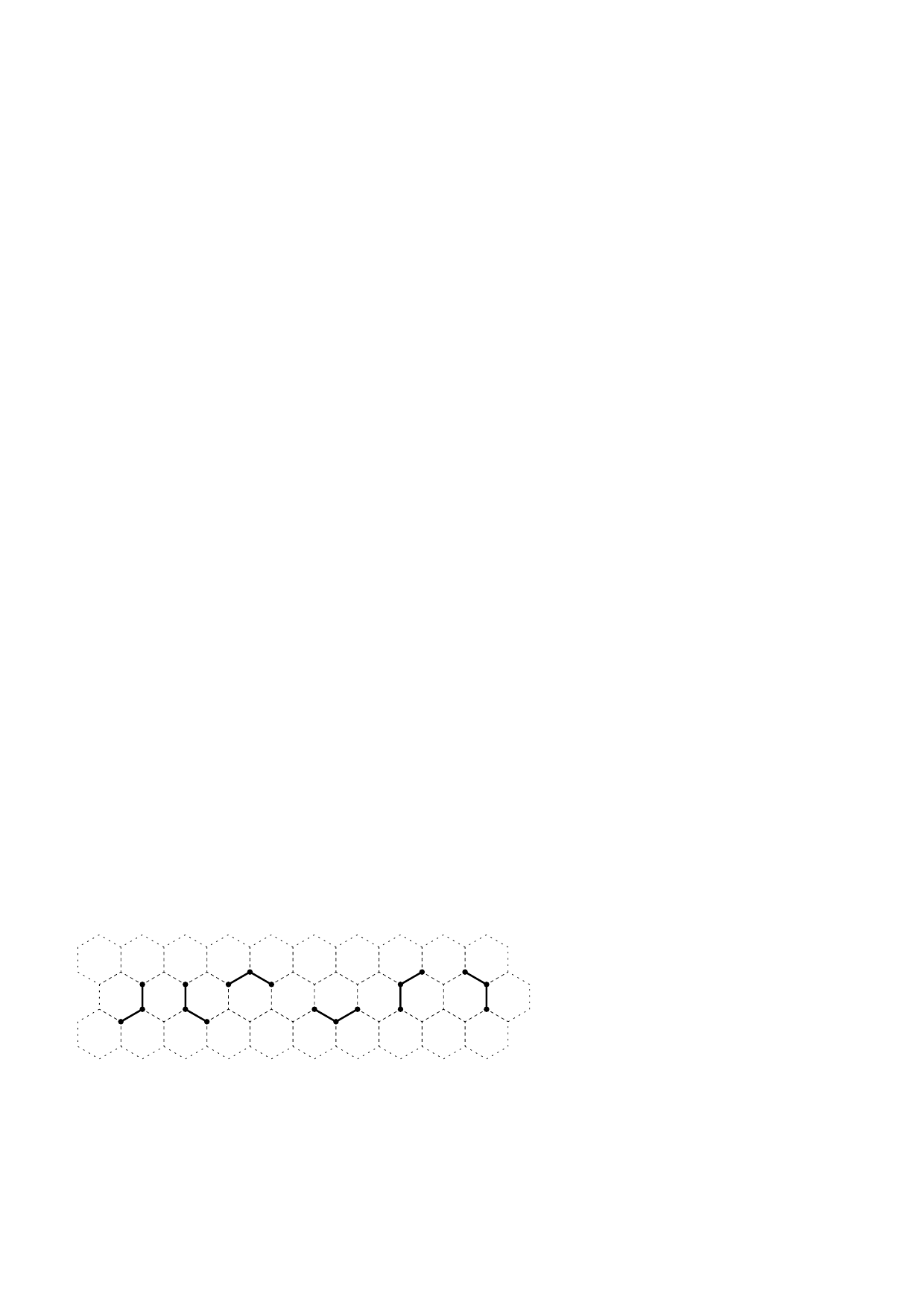}
\caption{Hexagonal lattice and the representation of $6$ polyiamonds of size $3$.}
\label{fig:hexlat}
\end{figure}

\section{Proof of Theorem~\ref{thm:main}}
\label{sec:proof}
Let $G_n$ denote the number of polyiamond-vertex pairs $(P,c)$ so that $P$ is a polyiamond with $n$ vertices and $c$ is a specially marked vertex that belongs to $P$ at the black bullet as in Fig.~\ref{fig:ghk}-(g) (which has a vertex vertically right below it). All the crossed vertices are not allowed to be included in $P$. We call these crossed vertices ``forbidden vertices''. We do not constrain any other vertices, and they may or may not belong to $P$. (The white bullets are for clearer arguments later, we do not constrain anything on them. They are merely non-forbidden neighbors of $c$.)
Let $H_n,K_n$ be the quantities corresponding to the Fig.~\ref{fig:ghk}-(h),(k), respectively.
Note that we allow $n$ to be zero. In this situation, the vertex at the black bullet does not belong to the (empty) polyiamond. We assume
\[
	G_0=H_0=K_0=1.
\]

\begin{figure}[h]
\centering
\includegraphics[width=0.75\textwidth]{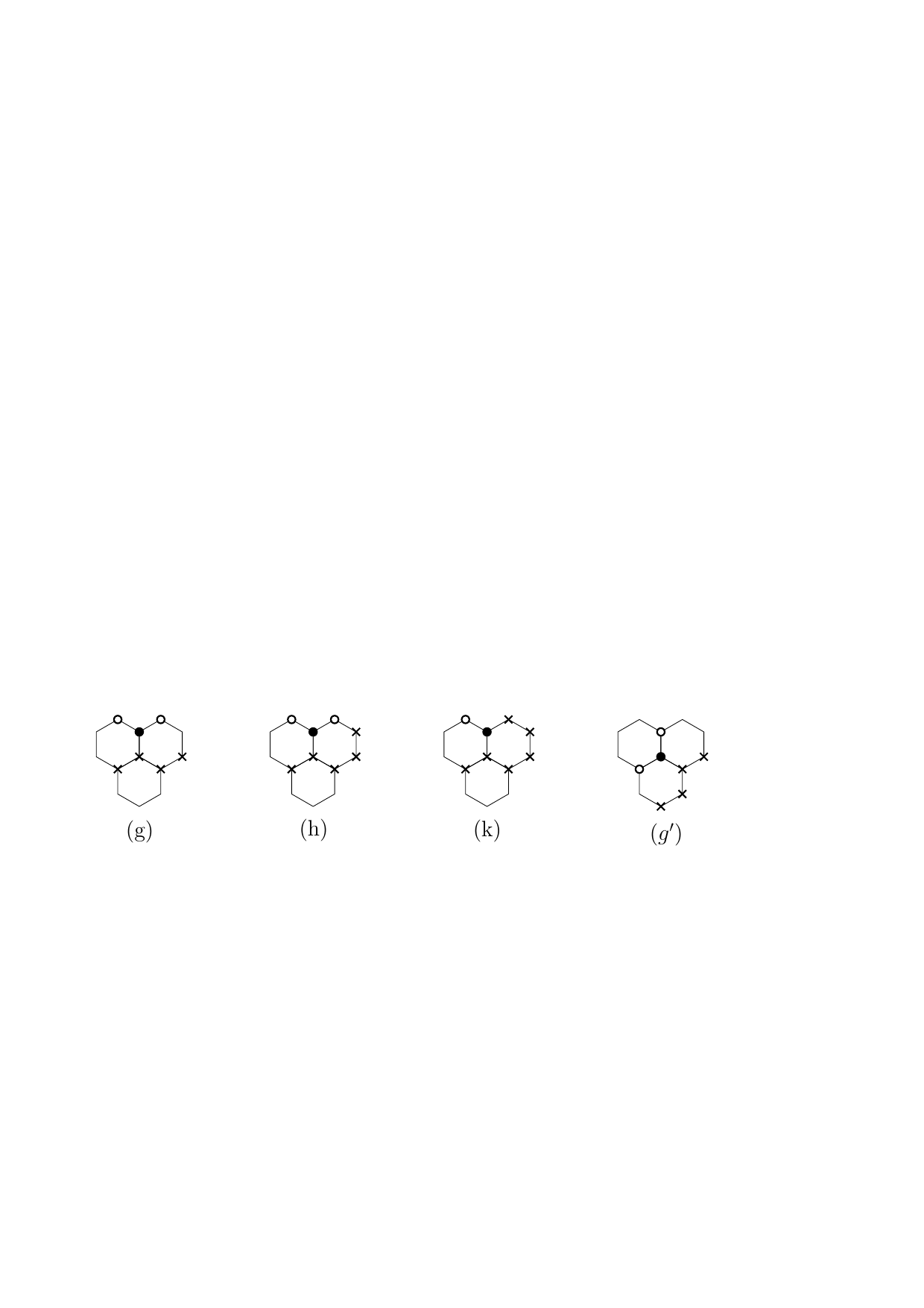}
\caption{Marked vertices and the surroundings.}
\label{fig:ghk}
\end{figure}

\begin{proposition}\label{prop:recur}
	For every $n\ge 1$,
	\[
		G_n \le \sum_{i+j=n-1} G_i H_j,\qquad H_n \le \sum_{i+j=n-1} G_i K_j,\qquad K_n \le G_{n-1},
	\]
        where $i,j$ range over nonnegative integers.
\end{proposition}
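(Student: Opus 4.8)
The plan is to derive each of the three inequalities from an injection that breaks a pair $(P,c)$ into smaller pairs by deleting a single vertex — the marked vertex $c$, or the vertex occupying the distinguished site next to it — and the role of the crossed ``forbidden'' positions in Fig.~\ref{fig:ghk} is precisely to control what happens under this deletion.

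Consider the first inequality. Take $(P,c)$ counted by $G_n$ with $n\ge 1$ and delete the relevant vertex $v$. One of the three lattice neighbours of $v$ is a crossed, hence empty, site, so $v$ has at most two occupied neighbours and $P$ breaks into at most two connected pieces $P_1$ and $P_2$ (either possibly empty), with $|P_1|+|P_2|=n-1$ since exactly $v$ was removed. Each nonempty piece receives a canonical new marked vertex: the site at which it met $v$, or, in the piece that still contains $c$, the vertex $c$ itself. The crux is to check that $(P_1,\cdot)$ is a $G$-pair and $(P_2,\cdot)$ is an $H$-pair — equivalently, that the sites forbidden for these induced configurations are empty in the respective pieces. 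Granting this, the map $(P,c)\mapsto((P_1,\cdot),(P_2,\cdot))$ is injective: the two marked pieces together with the knowledge of which of the three decompositions produced them determine where $v$ must be reinserted, hence recover $(P,c)$. Summing over $i=|P_1|$, $j=|P_2|$ gives $G_n\le\sum_{i+j=n-1}G_iH_j$. The second inequality is proved identically, with the two pieces now a $G$-pair and a $K$-pair; and for the third, the additional forbidden site in Fig.~\ref{fig:ghk}-(k) leaves $v$ with at most one occupied neighbour, so there is a single piece, a $G$-pair of size $n-1$, whence $K_n\le G_{n-1}$. The boundary cases $n=1$ and the empty pieces are absorbed by the convention $G_0=H_0=K_0=1$.

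The one step that is not purely formal is verifying that the pieces inherit the advertised configurations: every site forbidden for an induced configuration must be empty in the piece it belongs to. Each such site is empty for one of three reasons: it was already a crossed position of the original configuration at $c$; or it is the deleted vertex $v$, which lies outside the piece under consideration; or it sits beyond the ``barrier'' formed by the crossed positions and so cannot belong to that piece at all. Arranging the three configurations of Fig.~\ref{fig:ghk} — their marked vertices, their forbidden sites, and the attachment sites recorded by the white bullets — so that these three cases cover every relevant site simultaneously is exactly the content of the proof, and once the pictures are fixed the verification is a short, finite, by-hand check. For the injectivity step one should also remember that, since polyiamonds are counted up to translation, the reconstruction must glue the pieces back at a fixed canonical site, which is why the distinguished positions in the configurations are pinned down rather than left floating.
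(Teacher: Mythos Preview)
Your overall strategy is the same as the paper's: remove one vertex from a pair $(P,c)$, observe that at most two components remain because one neighbour of the removed vertex is forbidden, hand each component a new marked vertex, and then check by inspection of Fig.~\ref{fig:ghk} that the induced local pictures are of the required types. Your third paragraph, isolating the three reasons a newly forbidden site must be empty (it was already crossed; it is the deleted vertex; it is cut off by the barrier of crossed sites), is exactly the substance of the verification and matches what the paper does informally.

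Where your write-up goes wrong is in the identification of the deleted vertex. You say you delete ``the marked vertex $c$, or the vertex occupying the distinguished site next to it'', and later assign to ``the piece that still contains $c$'' the mark $c$ itself. In the paper's argument the deleted vertex is \emph{always} the black bullet $c$; there is no alternative vertex, and consequently no piece ever contains $c$. The two new marks are simply the two white-bullet neighbours of $c$ (only one white bullet in case~(k)), and the finite check is that the left white bullet sits in a configuration equivalent, after a lattice symmetry, to type~(g), while the right one sits in type~(h) (respectively (k), respectively nothing). Your alternative reading---deleting an occupied neighbour of $c$ and keeping $c$ as a mark---does not make sense here, because no neighbour of $c$ is guaranteed to be occupied, and in any case the types (g), (h), (k) are calibrated to the black bullet, not to a neighbour. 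If you simply set $v=c$ throughout and drop the clause about ``the piece that still contains $c$'', your argument becomes correct and coincides with the paper's.
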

\begin{proof}
	In the case of $G_n$, the vertex $c$ at the black bullet is connected to the rest of the polyiamond $P$ via the two vertices at the white bullets. After excluding $c$ from the polyiamond, the remaining $P\setminus c$ either stays connected or gets decomposed into two polyiamonds. In either case, we can decompose the remaining into two polyiamonds of $i,j$ vertices so that each of them, if not empty, contains a vertex at the left, right white bullet, respectively. Obviously, $i+j=n-1$. Note that we allow here one polyiamond or both polyiamonds to be empty. An example for the former situation is when the vertex at the left white bullet does not belong to $P$ and $P\setminus c$ is connected and contains a vertex at the right white bullet. The latter situation, that is both polyiamonds are empty, happens when $n=1$. In total, we only constrain $i,j\ge 0$.

	Let us look at one of the two polyiamonds with the marked vertex at the left white bullet. The surrounding is now actually of Type (g') as in Fig.~\ref{fig:ghk}-(g'). (We can observe that the distance from the marked vertex to the closest forbidden vertex is 3.) We can rotate/flip the lattice to see that Types (g') and (g) are actually the same. Note that we even forbid more vertices than what are asked by Type (g'), but it does not matter for the sake of upper bounds. Arguing similarly, the surrounding for the right white bullet is of Type (h). Therefore,
	\[
		G_n \le \sum_{i+j=n-1} G_i H_j.
	\]

	The same kind of arguments works for the case of $H_n$ and $K_n$. We do not repeat it, except discussing a bit on the case of $K_n$. The black bullet here has only one non-forbidden neighbor, hence there is no convolution as in the previous cases with two non-forbidden neighbors. The new marked vertex (the white bullet) is of Type (g) as we can observe that the distance from the new marked vertex to the nearest forbidden vertex is 3.
\end{proof}

Now it becomes routine to upper bound the growth rate of $G_n$ as below.
\begin{lemma}
	The growth rate of $G_n$ is at most $1+2z+3z^2$ for the unique real root $z$ of the equation $2z^3+z^2-1=0$.
\end{lemma}
\begin{proof}
	By replacing the inequalities in Proposition~\ref{prop:recur} by the corresponding equalities, we define the following sequences $\hat{G},\hat{H},\hat{K}$ so that: $\hat{G}_0=\hat{H}_0=\hat{K}_0=1$ and for $n\ge 1$,
	\[
		\hat{G}_n = \sum_{i+j=n-1} \hat{G}_i \hat{H}_j,\qquad \hat{H}_n = \sum_{i+j=n-1} \hat{G}_i \hat{K}_j,\qquad \hat{K}_n = \hat{G}_{n-1}.
	\]
	It is obvious that for every $n$,
	\[
		G_n\le \hat{G}_n,\qquad H_n\le \hat{H}_n,\qquad K_n\le\hat{K}_n,
	\]
    and the calculation of the growth rate of $\hat{G}_n$ is standard as below.

	Consider the generating functions
	\[
		g(x)=\sum_{i\ge 0} \hat{G}_i x^i,\qquad h(x)=\sum_{i\ge 0} \hat{H}_i x^i,\qquad k(x)=\sum_{i\ge 0} \hat{K}_i x^i,
	\]
	which satisfy
	\[
		g(x) = 1+xg(x)h(x),\qquad h(x) = 1+xg(x)k(x),\qquad k(x)=1+xg(x).
	\]
	We eliminate $h(x),k(x)$ from the equations by writing
	\begin{multline*}
		g(x) = 1+xg(x)h(x) = 1+xg(x)(1+xg(x)k(x)) = 1+xg(x) + x^2g^2(x)k(x) \\
		= 1+xg(x) + x^2g^2(x)(1+xg(x)) = 1+xg(x) + x^2g^2(x) + x^3g^3(x).
	\end{multline*}
	Denoting $z(x) = xg(x)$, we have
	\[
		\frac{z(x)}{x} = 1 + z(x) + z^2(x) + z^3(x).
	\]
	Note that the coefficients of both generating functions $g(x)$ and $z(x)$ grow at the same rate and the radius of convergence $x$ must satisfy
	\begin{align*}
		\frac{z}{x} &= 1+ z + z^2 + z^3,\\
		\frac{1}{x} &= 1 + 2z + 3z^2,
	\end{align*}
	where $x,z>0$. (The second equation is the derivative by $z$ of the first equation.)
	Substituting $1/x$ in the first equation by the one in the second, we obtain
	\[
		z(1+2z+3z^2) = 1+z+z^2+z^3,
	\]
	which can be simplified to
	\[
		2z^3+z^2-1=0.
	\]
	It follows that the radius of convergence is $x=1/(1 + 2z + 3z^2)$ for the unique real root $z$.
\end{proof}

To finish the proof of Theorem~\ref{thm:main}, it suffices to observe the following.
\begin{observation}
	The growth rate of $G_n$ is an upper bound on the growth rate of the number of polyiamonds.
\end{observation}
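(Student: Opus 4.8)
The plan is to show that $T(n)$, the number of polyiamonds with $n$ vertices (in the hexagonal dual representation), is bounded above by a constant multiple of $G_n$, so that the exponential growth rate of $T(n)$ is at most that of $G_n$, which was computed in the previous proposition. The key idea is that the quantity $G_n$ counts \emph{pointed} polyiamonds — pairs $(P,c)$ with a marked vertex $c$ sitting in a prescribed local configuration (Type (g)) — and every polyiamond admits such a marked vertex, so the map forgetting the mark is surjective onto polyiamonds (up to the easy one-vertex discrepancy $T(1)=1$ versus $T(1)=2$, which is irrelevant asymptotically).

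First I would argue that every nonempty polyiamond $P$ on the hexagonal lattice contains at least one vertex $c$ whose surrounding is exactly of Type (g): for instance, take the vertex of $P$ that is extremal in a suitably chosen lexicographic order on the lattice (say, topmost among the leftmost, or whatever orientation makes Type (g) the ``boundary shape'' — the vertex has its forbidden positions empty precisely because no vertex of $P$ lies beyond it in that direction, and the one allowed neighbor below it is what connects it to the rest). This gives an injection from polyiamonds $P$ into the set of pairs $(P,c)$ counted by $G_n$ — in fact we only need that each $P$ yields at least one valid pair, i.e. that the forgetful map from $G_n$-pairs onto polyiamonds is surjective, hence $T(n)\le G_n$ for all $n\ge 2$ (and $T(1)=1=G_1$ or so, in any case bounded).

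Having $T(n)\le G_n$ (or $T(n)\le C\,G_n$ for a constant $C$, which suffices just as well), the growth rate $\lambda_T=\lim_n T(n)^{1/n}$ is at most $\limsup_n G_n^{1/n}$, and by the preceding proposition the latter is at most $1+2z+3z^2$ with $2z^3+z^2-1=0$. Combined with the existence of $\lambda_T$ (cited from~\cite{madras1999pattern}), this completes the proof of Theorem~\ref{thm:main}.

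The main obstacle I anticipate is making precise the claim that \emph{every} polyiamond has a vertex of Type (g) with all forbidden positions genuinely empty, and that the extremal-vertex choice does not accidentally require a forbidden cell to be occupied; this is really a case check on the hexagonal lattice geometry, verifying that the Type (g) configuration in Fig.~\ref{fig:ghk} is exactly the ``nothing past this vertex in direction $d$'' pattern for the chosen direction $d$, with the single non-forbidden neighbor pointing back into $P$. A secondary, minor point is handling the degenerate small cases ($n\le $ a few) and the $T(1)$ convention explicitly, but as noted these contribute nothing to the asymptotics. Everything else is immediate from the results already established in the excerpt.
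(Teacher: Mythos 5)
Your proposal is correct and is essentially the paper's argument: mark a canonically chosen extremal vertex (the paper takes the right-most among the bottom-most) and observe that extremality forces all the forbidden positions of the Type~(g) pattern to be empty. The one detail you gloss over is that the hexagonal lattice is bipartite, so the extremal vertex may have its vertical neighbour \emph{above} rather than below it, landing in Type~(g$'$) instead of Type~(g); the paper resolves this via the lattice symmetry identifying (g) and (g$'$), at the cost of a factor of $2$ (so $T(n)\le 2G_n$ rather than $T(n)\le G_n$) --- harmless for the growth rate, and already covered by your $T(n)\le C\,G_n$ hedge.
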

\begin{proof}
	For a polyiamond $P$, among the bottom-most vertices of $P$ take the right-most vertex $c$. If $c$ has a vertex vertically right \emph{below} it, then $(P,c)$ is of Type (g). If $c$ has a vertex $d$ vertically right \emph{above} it instead, then $(P,c)$ is of Type (g') as in Fig.~\ref{fig:ghk}-(g'). Recall that Type (g) and Type (g') are equivalent, as one can be obtained from the other by rotating/flipping the lattice appropriately. Therefore, $T(n)\le 2G_n$. It follows that the growth rate of $G_n$ is an upper bound on the growth rate of polyiamonds.
\end{proof}
\subsection*{Acknowledgments}
The author is grateful to Gill Barequet for his useful comments on the manuscript.
\bibliographystyle{unsrt}
\bibliography{polyia36} 

@article{bui2023number,
  title={The Number of Polyiamonds is Supermultiplicative},
  author={Bui, Vuong},
  journal={The Electronic Journal of Combinatorics},
  volume={30},
  number={4},
  pages={P4--38},
  year={2023}
}

@article{madras1999pattern,
  title={A pattern theorem for lattice clusters},
  author={Madras, Neal},
  journal={Annals of Combinatorics},
  volume={3},
  number={2-4},
  pages={357--384},
  year={1999},
  publisher={Springer}
}

@article{barequet2019improved,
  title={An improved upper bound on the growth constant of polyiamonds},
  author={Barequet, Gill and Rote, G{\"u}nter and Shalah, Mira},
  journal={Acta Mathematica Universitatis Comenianae},
  volume={88},
  number={3},
  pages={429--436},
  year={2019}
}

@article{klarner1973procedure,
  title={A procedure for improving the upper bound for the number of n-ominoes},
  author={Klarner, David A and Rivest, Ronald L},
  journal={Canadian Journal of Mathematics},
  volume={25},
  number={3},
  pages={585--602},
  year={1973},
  publisher={Cambridge University Press}
}

@article{barequet2022improved,
  title={Improved upper bounds on the growth constants of polyominoes and polycubes},
  author={Barequet, Gill and Shalah, Mira},
  journal={Algorithmica},
  volume={84},
  number={12},
  pages={3559--3586},
  year={2022},
  publisher={Springer}
}

@article{bui2025bounding,
  title={{Bounding Klarner's constant from above using a simple recurrence}},
  author={Bui, Vuong},
  volume={124},
  pages={517--523},
  journal={Archiv der Mathematik},
  year={2025}
}

@article{sykes1976percolation,
  title={{Percolation processes in two dimensions. I. Low-density series expansions}},
  author={Sykes, MF and Glen, Maureen},
  journal={Journal of Physics A: Mathematical and General},
  volume={9},
  number={1},
  pages={87},
  year={1976},
  publisher={IOP Publishing}
}

\end{document}